\def\th@mytheorem{%
  \let\thm@indent\noindent
  \thm@headfont{\bfseries}
    \itshape
}
\def\th@myremark{%
  \let\thm@indent\noindent
  \thm@headfont{\bfseries}
}
\def\TagsOnLeft{\tagsleft@true}
\def\TagsOnRight{\tagsleft@false}
\def\@makefnmark{\hbox{$\left(^{\@thefnmark}\right)$\;}}
\theoremstyle{mytheorem}
\newtheorem{Theorem}{Theorem}[section]
\newtheorem{Corollary}[Theorem]{Corollary}
\newtheorem{Proposition}[Theorem]{Proposition}
\theoremstyle{myremark}
\newtheorem{Definition}[Theorem]{Definition}
\newtheorem{Remark}[Theorem]{Remark}
\def\bs{\boldsymbol}
\def\BB{\mathbb B}
\def\bu{$\bullet$\quad}
\def\BV{\bs{V}}
\def\BVV#1{\BV_{\!#1}}
\def\BX{\bs{X}}
\def\CC{\mathbb C}
\def\conv{\operatorname{conv}}
\def\CVX{\mathcal{CVX}}
\def\Dqed{\text{\popQED}\tag*{\qed}}
\def\halfskip{\vskip 6pt plus 1pt minus 1pt}
\def\intt{\operatorname{int}}
\def\NN{\mathbb N}
\def\PSH{\mathcal{PSH}}
\def\RR{\mathbb R}
\def\ttimes{\times\dots\times}
\def\too{\longrightarrow}
\def\tuu{\longmapsto}
\def\wdht{\widehat}
\def\wdtl{\widetilde}
\def\eps{\varepsilon}
\def\Omega{\varOmega}
\let\phi=\varphi
\def\Phi{\varPhi}
\begin{document}
\title
{An elementary proof of the cross theorem in the Reinhardt case}

\author{Marek Jarnicki}
\address{Jagiellonian University, Institute of Mathematics,
{\L}ojasiewicza 6, 30-348 Krak\'ow, Poland} \email{Marek.Jarnicki@im.uj.edu.pl}

\author{Peter Pflug}
\address{Carl von Ossietzky Universit\"at Oldenburg, Institut f\"ur Mathematik,
Postfach 2503, D-26111 Oldenburg, Germany}
\email{pflug@mathematik.uni-oldenburg.de}
\thanks{The research was partially supported by the DFG-grant 436POL113/103//0-2.}

\subjclass[2000]{32D15, 32A07}

\keywords{separately holomorphic function, cross theorem, Reinhardt domain}

\begin{abstract}
We present an elementary proof of the cross theorem in the case of Reinhardt domains.
The results illustrates the well-known interrelations between the holomorphic geometry
of a Reinhardt domain and the convex geometry of its logarithmic image.
\end{abstract}

\maketitle


\section{Introduction. Main result.}
The problem of continuation of separately holomorphic functions defined on
a cross has been investigated in several papers, e.g.~\cite{Ber1912},  \cite{Sic1969a},\linebreak
\cite{Sic1969b}, \cite{AkhRon1973}, \cite{Zah1976}, \cite{Sic1981a}, \cite{Shi1989},
\cite{NguSic1991},\linebreak \cite{NguZer1991}, \cite{NguZer1995},
\cite{NTV1997}, \cite{AleZer2001}, \cite{Zer2002} and may be formulated in the form of
the following \textit{cross theorem}.

\begin{Theorem}\label{ThmMainCross}
Let $D_j\subset\CC^{n_j}$ be a domain of holomorphy and let $A_j\subset D_j$ be a locally pluriregular set,
$j=1,\dots,N$, $N\geq2$. Define the {\rm cross}
$$
\BX:=\bigcup_{j=1}^NA_1\ttimes A_{j-1}\times D_j\times A_{j+1}\ttimes A_N.
$$
Let $f:\BX\too\CC$ be {\rm separately holomorphic}, i.e.~for any
$(a_1,\dots,a_N)\in A_1\ttimes A_N$ and $j\in\{1,\dots,N\}$ the function
$$
D_j\ni z_j\tuu f(a_1,\dots,a_{j-1},z_j,a_{j+1},\dots,a_N)\in\CC
$$
is holomorphic. Then $f$ extends holomorphically to a uniquely determined function $\wdht f$ on
the domain of holomorphy
\begin{gather*}
\wdht{\BX}:=\Big\{(z_1,\dots,z_N)\in D_1\ttimes D_N: \sum_{j=1}^Nh_{A_j,D_j}^\ast(z_j)<1\Big\},
\tag{*}
\end{gather*}
where $h_{A_j,D_j}^\ast$ is the upper regularization of the {\rm relative extremal function}
$h_{A_j,D_j}$, $j=1,\dots,N$.
\end{Theorem}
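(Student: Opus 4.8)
The plan is the classical two-step reduction followed by a Bernstein--Walsh construction. Set $\Phi(z):=\sum_{j=1}^N h^\ast_{A_j,D_j}(z_j)$, so that $\wdht{\BX}=\{\Phi<1\}$. Since each $h^\ast_{A_j,D_j}$ is plurisubharmonic with values in $[0,1]$, $\Phi$ is plurisubharmonic, and $1/(1-\Phi)$, being a convex increasing function of $\Phi$, is a plurisubharmonic exhaustion of $\{\Phi<1\}$; hence $\wdht{\BX}$ is already a domain of holomorphy. Because local pluriregularity gives $h^\ast_{A_j,D_j}=0$ on $A_j$, we have $\BX\subset\wdht{\BX}$, so the statement is genuinely an extension statement, and uniqueness is immediate: $\wdht{\BX}$ is connected and meets $\BX$ along sets on which $f$ is holomorphic in at least one variable, so the identity theorem pins down $\wdht f$.

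First I would reduce $N$ to $2$ by induction. Fixing $a_N\in A_N$, the slice $f(\cdot,a_N)$ is separately holomorphic on the $(N-1)$-fold cross in $D_1\ttimes D_{N-1}$ and, by the inductive hypothesis, extends to $D':=\{\sum_{j<N}h^\ast_{A_j,D_j}<1\}$. Regarding the first $N-1$ variables as a single block $z'$ living in $D'$ with distinguished set $A':=A_1\ttimes A_{N-1}$ (a locally pluriregular subset, being a product of such), the extended function becomes separately holomorphic on the two-fold cross $(D'\times A_N)\cup(A'\times D_N)$, the two prescriptions agreeing on $A'\times A_N$. Applying the two-fold cross theorem extends $f$ to $\{h^\ast_{A',D'}(z')+h^\ast_{A_N,D_N}(z_N)<1\}$, and to identify this with $\wdht{\BX}$ I need the additivity lemma
\[
h^\ast_{A',D'}(z')=\sum_{j<N}h^\ast_{A_j,D_j}(z_j),\qquad z'\in D'.
\]
One inequality is free, since $\sum_{j<N}h^\ast_{A_j,D_j}$ is plurisubharmonic on $D'$, vanishes on $A'$, and is $<1$ there, hence is an admissible competitor; the reverse uses that this sum tends to $1$ at $\partial D'$ together with the maximality of the relative extremal function off $\overline{A'}$. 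I expect this lemma to be one of the two genuine difficulties.

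The other, and the technical heart, is the two-fold cross theorem itself. After exhausting the $D_j$ by relatively compact domains of holomorphy and the $A_j$ by compact, locally pluriregular (hence nonpluripolar) subsets, and using the monotone convergence of the extremal functions to recover $\wdht{\BX}$ in the limit, I may assume $D_1,D_2$ bounded pseudoconvex and $A_1,A_2$ compact nonpluripolar. The basic estimate is the two-constants inequality: if $g\in\mathcal O(D)$ satisfies $|g|\le1$ on $A$ and $|g|\le e^M$ on $D$, then $\tfrac1M\log|g|$ is a competitor in the definition of $h_{A,D}$, so $|g(z)|\le e^{M h^\ast_{A,D}(z)}$. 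A Baire-category argument (separate holomorphy forces separate continuity, and a nonpluripolar compact is not a countable union of the level sets along which the slices blow up) gives local boundedness of the slices $z_1\mapsto f(z_1,a_2)$ and $z_2\mapsto f(a_1,z_2)$ along the cross. Fixing $\eps>0$ and approximating these slices on $A_1$, resp.\ $A_2$, by holomorphic functions, the two-constants inequality forces the approximants to converge locally uniformly on $\{h^\ast_{A_1,D_1}+h^\ast_{A_2,D_2}<1-\eps\}$; the limit is separately holomorphic and locally bounded, hence holomorphic by Hartogs' theorem.

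The step I expect to be the main obstacle is the gluing and the sharp boundary control. The construction produces an extension from the $A_1\times D_2$ side and one from the $D_1\times A_2$ side, and these must be shown to be restrictions of a single holomorphic function on $\{h^\ast_{A_1,D_1}+h^\ast_{A_2,D_2}<1-\eps\}$; single-valuedness hinges on the fact that this sublevel set is itself pseudoconvex (again via $1/(1-\eps-h^\ast_{A_1,D_1}-h^\ast_{A_2,D_2})$), so the locally defined extensions carry no monodromy and the two sides agree on their overlap by the identity theorem. Pushing the Bernstein--Walsh estimates uniformly as $\eps\to0$ then yields $\wdht f$ on all of $\wdht{\BX}$, and combining this two-fold result with the additivity lemma closes the induction on $N$.
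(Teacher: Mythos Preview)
Your proposal outlines the classical Siciak--Zahariuta approach to the \emph{general} cross theorem: reduce to $N=2$ by induction via an additivity lemma $h^\ast_{A',D'}=\sum_{j<N}h^\ast_{A_j,D_j}$, then handle the two-fold case by exhaustion, a Baire-category local-boundedness argument, and two-constants estimates. The strategy is correct in outline, though the step ``approximating these slices on $A_1$, resp.\ $A_2$, by holomorphic functions'' is where the real analytic work lies and is only gestured at here (in the literature it is carried out via doubly orthogonal systems or common bases, not a soft approximation-plus-identity-theorem argument).

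The paper, however, takes an entirely different and far more elementary route, and \emph{only in the special case} where each $D_j$ is a Reinhardt domain of holomorphy and each $A_j$ is a nonempty Reinhardt \emph{open} set. Since the $A_j$ are open, $\BX$ is itself a domain and Hartogs' lemma makes every separately holomorphic function on $\BX$ genuinely holomorphic, so the theorem reduces to identifying the envelope of holomorphy of the Reinhardt domain $\BX$. The paper then passes to logarithmic images: it proves a purely convex ``cross theorem'' (Proposition~\ref{PropCEF}) asserting $\conv(T)=\{\sum_j\Phi_{S_j,U_j}<1\}$ for the convex extremal functions $\Phi_{S_j,U_j}$, shows that $h^\ast_{A_j,D_j}$ corresponds to $\Phi_{\log A_j,\log D_j}$ under $z\mapsto(\log|z_1|,\dots,\log|z_n|)$ (Corollary~\ref{CorRGeomII}), and concludes via the standard description of the envelope of holomorphy of a Reinhardt domain in terms of $\conv(\log\Omega)$ (Corollary~\ref{CorRGeomI}). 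No pluripotential estimates, no Baire category, no two-constants inequality appear. Your approach targets the full theorem at the cost of exactly the heavy analytic machinery the paper sets out to avoid; the paper's approach buys an argument accessible in a first course on several complex variables, at the cost of applying only to the Reinhardt setting.
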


Recall that $h_{A,D}:=\sup\{u\in\PSH(D): u\leq1,\;u|_A\leq0\}$.

Observe that in the case where $A_j$ is open, $j=1,\dots,N$, the cross $\BX$ is a domain
in $\CC^n$ with $n:=n_1+\dots+n_N$. Moreover, by the classical Hartogs lemma, every separately
holomorphic function on $\BX$ is simply holomorphic. Consequently, the formula (*) is nothing else as a
description of the envelope of holomorphy of $\BX$. Thus, it is natural to conjecture that
in this case the formula (*) may be obtained without the cross theorem machinery.
Unfortunately, we do not know any such a simplification.

\textit{The aim of this note is to present an elementary geometric
proof of Theorem \ref{ThmMainCross} in the case
where $D_j$ is a Reinhardt domain and $A_j$ is a non-empty Reinhardt open set, $j=1,\dots,N$.}
The proof (\S\;\ref{SectionProof}) will be based on well-known interrelations
between the holomorphic geometry of a Reinhardt domain and the convex geometry of its logarithmic
image. Moreover, the cross theorem for the Reinhardt case may be taught in any lecture on Several
Complex Variables; its proof needs only some basic facts for Reinhardt domains
(see \cite{JarPfl2008}).

\section{Convex geometry.}\label{SectionCG}

We begin with some elementary results related to the convex domains in $\RR^n$.

\begin{Definition}\label{DefCEF}
Let $\varnothing\neq S\subset U\subset\RR^n$, where $U$ is a convex domain. Define the
\textit{convex extremal function}
\index{convex extremal function}%
$$
\Phi_{S,U}:=\sup\{\phi\in\CVX(U),\; \phi\leq1,\;\phi|_S\leq0\},
$$
where $\CVX(U)$ stands for the family of all convex functions $\phi:U\too[-\infty,+\infty)$.
\end{Definition}

\begin{Remark}\label{RemCEF}
\begin{enumerate}[(a)]
\item\label{RemCEFa}
$\Phi_{S,U}\in\CVX(U)$, $0\leq\Phi_{S,U}<1$, and $\Phi_{S,U}=0$ on $S$.

\item\label{RemCEFb} $\Phi_{\conv(S),U}\equiv\Phi_{S,U}$.

\item\label{RemCEFe} If $\varnothing\neq S_k\subset U_k\subset\RR^n$, $U_k$ is a convex domain,
$k\in\NN$,
$S_k\nearrow S$, and $U_k\nearrow U$, then $\Phi_{S_k,U_k}\searrow\Phi_{S,U}$.

\item\label{RemCEFc} For $0<\mu<1$, let $U_\mu:=\{x\in U: \Phi_{S,U}(x)<\mu\}$
(observe that $U_\mu$ is a convex domain with $S\subset U_\mu$). Then $\Phi_{S,U_\mu}=(1/\mu)\Phi_{S,U}$
on $U_\mu$.

Indeed, the inequality ``$\geq$'' is obvious. To prove the opposite inequality, let
$$
\phi:=\begin{cases}\max\{\Phi_{S,U},\;\mu\Phi_{S,U_\mu}\} & \text{ on } U_\mu\\
\Phi_{S,U} & \text{ on } U\setminus U_\mu
\end{cases}.
$$
Then $\phi\in\CVX(U)$, $\phi<1$, and $\phi=0$ on $S$. Thus $\phi\leq\Phi_{S,U}$ and
hence $\Phi_{S,U_\mu}\leq(1/\mu)\Phi_{S,U}$ in  $U_\mu$.

\item\label{RemCEFd}
Let $\varnothing\neq S_j\subset U_j\subset\RR^{n_j}$, where $U_j$ is a convex domain,
$j=1,\dots,N$, $N\geq2$. Put
$$
W:=\Big\{(x_1,\dots,x_N)\in U_1\ttimes U_N:\sum_{j=1}^N\Phi_{S_j,U_j}(x_j)<1\Big\}
$$
(observe that $W$ is a convex domain with $S_1\ttimes S_N\subset W$). Then
$$
\Phi_{S_1\ttimes S_N,W}(x)=\sum_{j=1}^N\Phi_{S_j,U_j}(x_j),\quad x=(x_1,\dots,x_N)\in W.
$$

Indeed, the inequality ``$\geq$'' is obvious.
To prove the opposite inequality we use induction on $N\geq2$.

Let $N=2$. To simplify notation write $A:=S_1$, $U:=U_1$, $B:=S_2$, $V:=U_2$.
Observe that $T:=(A\times V)\cup(U\times B)\subset W$ and directly from the
definition we get
$$
\Phi_{A\times B,W}(x,y)\leq\Phi_{A,U}(x)+\Phi_{B,V}(y),\quad (x,y)\in T.
$$
Fix a point $(x_0,y_0)\in W\setminus T$. Let
\begin{gather*}
\mu:=1-\Phi_{A,U}(x_0)\in(0,1],\quad
V_\mu:=\{y\in V: \Phi_{B,V}(y)<\mu\},\\
\phi:=\frac1\mu(\Phi_{A\times B,W}(x_0,\cdot)-\Phi_{A,U}(x_0)).
\end{gather*}
Then $\phi$ is a well-defined convex function on $V_\mu$, $\phi<1$ on $V_\mu$, and
$\phi\leq0$ on $B$. Thus, by (\ref{RemCEFc}),
$\phi(y_0)\leq\Phi_{B,V_\mu}(y_0)=\frac1\mu\Phi_{B,V}(y_0)$, which finishes the proof.

\halfskip

Now, assume that the formula is true for $N-1\geq2$. Put $S':=S_1\ttimes S_{N-1}$,
$$
W':=\{(x_1,\dots,x_{N-1})\in U_1\ttimes U_{N-1}:\sum_{j=1}^{N-1}\Phi_{S_j,U_j}(x_j)<1\}.
$$
Then, by the inductive hypothesis, we have
$$
\Phi_{S',W'}(x')=\sum_{j=1}^{N-1}\Phi_{S_j,U_j}(x_j),\quad x'=(x_1,\dots,x_{N-1})\in W'.
$$
Consequently,
$$
W=\{(x',x_N)\in W'\times U_N: \Phi_{S',W'}(x')+\Phi_{S_N,U_N}(x_N)<1\}.
$$
Hence, using the case $N=2$ (to $S'\subset W'$ and $S_N\subset U_N$), we get
\begin{multline*}
\Phi_{S_1\ttimes S_N,W}(x)=\Phi_{S',W'}(x')+\Phi_{S_N,U_N}(x_N)=
\sum_{j=1}^N\Phi_{S_j,U_j}(x_j),\\
x=(x',x_N)=(x_1,\dots,x_N)\in W.
\end{multline*}
\end{enumerate}
\end{Remark}

Notice that properties (\ref{RemCEFc}) and (\ref{RemCEFd})
correspond to analogous properties of
the relative extremal function --- cf.~e.g.~\cite{Sic1981a}.

\begin{Proposition}\label{PropCEF}
Let $\varnothing\neq S_j\subset U_j\subset\RR^{n_j}$, where
$U_j$ is a convex domain and $\intt S_j\neq\varnothing$, $j=1,\dots,N$, $N\geq2$,
and define the {\rm cross}
$$
T:=\bigcup_{j=1}^N S_1\ttimes S_{j-1}\times U_j\times S_{j+1}\ttimes S_N.
$$
Then
\index{cross}%
\begin{gather*}
\conv(T)=\Big\{(x_1,\dots,x_N)\in U_1\ttimes U_N: \sum_{j=1}^N\Phi_{S_j,U_j}(x_j)<1\Big\}=:W.
\quad\footnotemark
\end{gather*}
\footnotetext{It seems to us that this ``convex cross theorem'' is so far nowhere in the literature.}
\end{Proposition}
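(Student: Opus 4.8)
The plan is to prove both inclusions; ``$\subseteq$'' is immediate, while ``$\supseteq$'' will be reduced to the case $N=2$, which in turn rests on a separation argument. For ``$\subseteq$'': by the observation recorded in Remark~\ref{RemCEF}(\ref{RemCEFd}) the set $W$ is convex, so it suffices to check $T\subseteq W$; and if $(x_1,\dots,x_N)$ lies in the $j$-th arm of $T$ then $x_i\in S_i$ for $i\neq j$, hence $\Phi_{S_i,U_i}(x_i)=0$ by Remark~\ref{RemCEF}(\ref{RemCEFa}), while $\Phi_{S_j,U_j}(x_j)<1$ by the same remark, so $\sum_j\Phi_{S_j,U_j}(x_j)<1$.

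The heart of the matter will be the following Minkowski-type lemma: \emph{if $\varnothing\neq A\subseteq U\subseteq\RR^n$ with $U$ a convex domain, $x_0\in U$ and $\Phi_{A,U}(x_0)<\mu$ for some $\mu\in(0,1)$, then $x_0\in(1-\mu)\conv(A)+\mu U$}. I would prove it by contradiction. The set $K:=(1-\mu)\conv(A)+\mu U$ is a non-empty open convex subset of $U$, so if $x_0\notin K$ there is, by the separation theorem, a non-zero linear functional $\ell$ with $\ell(x_0)\geq\sup_K\ell$. Writing $\alpha:=\sup_A\ell=\sup_{\conv(A)}\ell$ and $\beta:=\sup_U\ell$, we have $\sup_K\ell=(1-\mu)\alpha+\mu\beta$; since $\alpha$ is finite ($A\neq\varnothing$) and $\mu>0$, finiteness of $\sup_K\ell$ forces $\beta<\infty$, and since a non-zero linear functional has no maximum on the open set $U$ we get $\alpha<\beta$. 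Then $\phi:=(\ell-\alpha)/(\beta-\alpha)$ is affine, hence $\phi\in\CVX(U)$, with $\phi\leq1$ on $U$ and $\phi|_A\leq0$, so $\Phi_{A,U}(x_0)\geq\phi(x_0)\geq\mu$, a contradiction.

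Granting the lemma, the case $N=2$ runs as follows; write $A:=S_1$, $U:=U_1$, $B:=S_2$, $V:=U_2$. Given $(x_0,y_0)\in W$, put $s:=\Phi_{A,U}(x_0)$ and $t:=\Phi_{B,V}(y_0)$, so $s+t<1$, and choose $\mu\in(s,1-t)$ and $\nu:=1-\mu\in(t,1-s)$. The lemma gives $x_0=\nu a+\mu u$ and $y_0=\mu b+\nu v$ with $a\in\conv(A)$, $u\in U$, $b\in\conv(B)$, $v\in V$; hence $(x_0,y_0)=\nu(a,v)+\mu(u,b)$ with $(a,v)\in\conv(A)\times V=\conv(A\times V)\subseteq\conv(T)$ and $(u,b)\in U\times\conv(B)=\conv(U\times B)\subseteq\conv(T)$, so $(x_0,y_0)\in\conv(T)$. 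For $N\geq3$ I would induct on $N$. Put $S':=S_1\ttimes S_{N-1}$, $W':=\{x'\in U_1\ttimes U_{N-1}:\sum_{j<N}\Phi_{S_j,U_j}(x_j)<1\}$, and let $T'$ be the corresponding $(N-1)$-fold cross; the inductive hypothesis gives $\conv(T')=W'$, and Remark~\ref{RemCEF}(\ref{RemCEFd}) (for $N-1$ factors) gives $\Phi_{S',W'}(x')=\sum_{j<N}\Phi_{S_j,U_j}(x_j)$ on $W'$. Since every $\Phi_{S_j,U_j}\geq0$, it follows that $W=\{(x',x_N)\in W'\times U_N:\Phi_{S',W'}(x')+\Phi_{S_N,U_N}(x_N)<1\}$, i.e.\ exactly the region $W$ of the already-proved case $N=2$ applied to $S'\subseteq W'$ and $S_N\subseteq U_N$ (whose hypotheses are inherited, as $\intt S'=\intt S_1\ttimes\intt S_{N-1}\neq\varnothing$ and $W'$ is a convex domain). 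That case gives $W=\conv\bigl((S'\times U_N)\cup(W'\times S_N)\bigr)$; finally $S'\times U_N$ is the $N$-th arm of $T$, and $W'\times S_N\subseteq\conv(T')\times\conv(S_N)=\conv(T'\times S_N)\subseteq\conv(T)$ because $T'\times S_N\subseteq T$, so $W\subseteq\conv(T)$.

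The one genuinely non-formal step is the Minkowski-type lemma --- producing the convex hull $\conv(A)$ out of the single scalar inequality $\Phi_{A,U}(x_0)<\mu$ --- and within it the separation step, where one must first dispose of the degenerate possibilities $\beta=+\infty$ and $\alpha=\beta$ before forming the affine competitor $\phi$. The remaining ingredients (the easy inclusion, the reduction to $N=2$, and the induction) are routine manipulations of convex hulls of products.
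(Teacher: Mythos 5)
Your proof is correct, and it takes a genuinely different route from the paper's. The paper first reduces to convex $S_j$ and bounded $U_j$ (Remark \ref{RemCEF}(\ref{RemCEFb}),(\ref{RemCEFe})), then argues topologically: quoting Rockafellar's Theorems 3.3 and 17.2, it shows that any point of $\partial(\conv(T))\cap(U\times V)$ is a proper convex combination of points of $\overline A\times\overline U$ and $\overline U\times\overline B$ --- this is where the hypothesis $\intt S_j\neq\varnothing$ is used --- and then splits a supporting linear form $L=P+Q$ at such a boundary point, concluding $\Phi_{A,U}(x_0)+\Phi_{B,V}(y_0)=1$ there via affine minorants; so $\conv(T)$ can have no boundary points inside $W$. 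You instead prove the inclusion $W\subset\conv(T)$ pointwise and constructively, via the decomposition lemma $\{\Phi_{A,U}<\mu\}\subset(1-\mu)\conv(A)+\mu U$, itself obtained by separating $x_0$ from the open convex Minkowski combination and exhibiting an affine competitor (your affine function $(\ell-\alpha)/(\beta-\alpha)$ plays the same role as the paper's $(P-P_A)/(P_U-P_A)$); your slightly terse step $\alpha<\beta$ does hold, since $\ell(x_0)<\beta$ ($\ell\neq0$, $U$ open) together with $(1-\mu)\alpha+\mu\beta\leq\ell(x_0)$ forces it. What your route buys: no reduction to bounded $U_j$, no use of $\overline{\conv(T)}=\conv(\overline T)$ or of the formula (**) for the convex hull of a union, and --- notably --- no use of $\intt S_j\neq\varnothing$ at all, so your argument actually proves the proposition for arbitrary non-empty $S_j$ (which is more than the paper claims, though the Reinhardt application only needs open $S_j$). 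The induction step for $N\geq3$ is essentially identical to the paper's, resting on Remark \ref{RemCEF}(\ref{RemCEFd}) and the two-factor case, except that you verify the inclusion $\conv\bigl((S'\times U_N)\cup(W'\times S_N)\bigr)\subset\conv(T)$ directly rather than through Rockafellar's hull formula.
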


\begin{proof} We may assume that $S_j$ is convex, $j=1,\dots,N$
(cf.~Remark \ref{RemCEF}(\ref{RemCEFb})).
The inclusion ``$\subset$'' is obvious. Let
\begin{gather*}
T_j:=S_1\ttimes S_{j-1}\times U_j\times S_{j+1}\ttimes S_N,\quad j=1,\dots,N,\\
T':=\bigcup_{j=1}^{N-1} S_1\ttimes S_{j-1}\times U_j\times S_{j+1}\ttimes S_{N-1},\quad
S':=S_1\ttimes S_{N-1}.
\end{gather*}
Recall (cf.~\cite{Roc1972}, Theorem 3.3) that
\begin{multline*}
\conv(T)\\
=\bigcup_{\substack{t_1,\dots,t_N\geq0 \\ t_1+\dots+t_N=1}}t_1T_1+\dots+t_NT_N=
\conv((\conv(T')\times S_N)\cup(S'\times U_N)).\tag{**}
\end{multline*}

We use induction on $N$.

$N=2$:
To simplify notation write $A:=S_1$, $U:=U_1$, $p:=n_1$, $B:=S_2$, $V:=U_2$, $q:=n_2$.
Using Remark \ref{RemCEF}(\ref{RemCEFe}), we may assume that $U, V$ are bounded.

Since $\conv(T)$ is open and $\conv(T)\subset W$,
we only need to show that for every $(x_0,y_0)\in\partial(\conv(T))\cap(U\times V)$ we have
$\Phi_{A,U}(x_0)+\Phi_{B,V}(y_0)=1$. Since $U, V$ are bounded, we have
$\overline{\conv(T)}=\conv(\overline T)$ (cf.~\cite{Roc1972}, Theorem 17.2) and therefore,
$(x_0,y_0)=t(x_1,y_1)+(1-t)(x_2,y_2)$, where $t\in[0,1]$,
$(x_1,y_1)\in\overline A\times\overline U$, $(x_2,y_2)\in\overline U\times\overline B$.
First observe that $t\in(0,1)$.

Indeed, suppose for instance that $(x_0,y_0)\in U\times(\overline B\cap V)$. Take an arbitrary
$x_\ast\in\intt A$ and let $r>0$, $\eps>0$ be such that
the Euclidean ball $\BB((x_\ast,y_0),r)$ is contained in $A\times V$
and $x_{\ast\ast}:=x_\ast+\eps(x_0-x_\ast)\in U$. Then
\begin{multline*}
(x_0,y_0)\in\intt(\conv(\BB((x_\ast,y_0),r)\cup\{(x_{\ast\ast},y_0)\}))\\
\subset\intt(\conv(\overline T))=\intt(\overline{\conv(T)})=\conv(T);
\end{multline*}
a contradiction.

Let $L:\RR^p\times\RR^q\too\RR$ be a linear form such that $L(x_0,y_0)=1$ and
$L\leq1$ on $T$. Since $1=L(x_0,y_0)=tL(x_1,y_1)+(1-t)L(x_2,y_2)$, we conclude that
$L(x_1,y_1)=L(x_2,y_2)=1$. Write $L(x,y)=P(x)+Q(y)$, where $P:\RR^p\too\RR$,
$Q:\RR^q\too\RR$ are linear forms.

Put $P_C:=\sup_CP$, $C\subset\RR^p$, $Q_D:=\sup_DQ$, $D\subset\RR^q$.
Since $L\leq1$ on $T$ and $L(x_1,y_1)=L(x_2,y_2)=1$, we conclude that
\begin{align*}
P_A+Q_V&=1,\\
P_U+Q_B&=1.
\end{align*}
In particular, $P_A=P_U$ iff $Q_B=Q_V$.
Consider the following two cases:

\bu $P_{A}<P_{U}$ and $Q_{B}<Q_{V}$: Then
$$
\frac{P-P_{A}}{P_{U}-P_{A}}\leq\Phi_{A,U},\quad
\frac{Q-Q_{B}}{Q_{V}-Q_{B}}\leq\Phi_{B,V}.
$$
Hence
$$
\Phi_{A,U}(x_0)+\Phi_{B,V}(y_0)\geq
\frac{P(x_0)-P_{A}}{1-Q_{B}-P_{A}}+
\frac{Q(y_0)-Q_{B}}{1-P_{A}-Q_{B}}=1.
$$

\bu $P_{A}=P_{U}$ and $Q_{B}=Q_{V}$: Then $P_{U}+Q_{V}=1$,
which implies that $(x_0,y_0)\in U\times V\subset\{L<1\}$; a contradiction.

\halfskip

Now, assume that the result is true for $N-1\geq2$. In particular,
$$
\conv(T')=\Big\{(x_1,\dots,x_{N-1})\in U_1\ttimes U_{N-1}:
\sum_{j=1}^{N-1}\Phi_{S_j,U_j}(x_j)<1\Big\}=:W'.
$$
Using (**), the case $N=2$, and Remark \ref{RemCEF}(\ref{RemCEFd}), we get
\begin{align*}
\conv(T)&=\conv((W'\times S_N)\cup((S'\times U_N))\\
&=\{(x',x_N)\in W'\times U_N: \Phi_{S', W'}(x')+\Phi_{S_N,U_N}(x_N)<1\}=W.\Dqed
\end{align*}
\end{proof}

\section{Reinhardt geometry.}\label{SectionRG}

Now we recall basic facts related to Reinhardt domains.

\begin{Definition}
We say that a set $A\subset\CC^n$ is a \textit{Reinhardt set}
\index{Reinhardt set}%
if for every $(a_1$, \dots, $a_n)\in A$ we have
$$
\{(z_1,\dots,z_n)\in\CC^n: |z_j|=|a_j|,\;j=1,\dots,n\}\subset A;
$$
cf.~\cite{JarPfl2008}, Definition 1.5.2. Put
\begin{align*}
\BVV{j}:&=\CC^{n-j-1}\times\{0\}\times\CC^{n-j},\quad \BVV{0}:=\BV_1\cup\dots\cup\BV_n,\\
\log A:&=\{(\log|z_1|,\dots,\log|z_n|): (z_1,\dots,z_n)\in A\setminus\BVV{0}\},\quad A\subset\CC^n,\\
\exp S:&=\{(z_1,\dots,z_n)\in\CC^n\setminus\BVV{0}: (\log|z_1|,\dots,\log|z_n|)\in S\},
\quad S\subset\RR^n,\\
A^\ast:&=\intt(\overline{\exp(\log A)}),\quad A\subset\CC^n.
\end{align*}
We say that a set $A\subset\CC^n$ is \textit{logarithmically convex (log-convex)}
\index{logarithmic convexity}%
if $\log A$ is convex; cf.~\cite{JarPfl2008}, Definition 1.5.5.
\end{Definition}

\begin{Theorem}[\cite{JarPfl2008}, Theorem 1.11.13]\label{1.11.13}
Let $\Omega\subset\CC^n$ be a Reinhardt domain. Then the following conditions are equivalent:

{\rm (i)} $\Omega$ is a domain of holomorphy;

{\rm (ii)} $\Omega$ is log-convex and $\Omega=\Omega^\ast\setminus
\bigcup_{\substack{j\in\{1,\dots,n\}\\ \Omega\cap\BVV{j}=\varnothing}}\BVV{j}$.
\end{Theorem}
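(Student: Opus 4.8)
Although Theorem \ref{1.11.13} is quoted from \cite{JarPfl2008}, I indicate how one would prove it, relying on the dictionary between $\Omega$ and $\log\Omega$ together with some standard facts about domains of holomorphy. Write $\BVV j=\{z\in\CC^n:z_j=0\}$ and $J:=\{j:\Omega\cap\BVV j=\varnothing\}$, so that $(ii)$ reads: $\log\Omega$ is convex and $\Omega=\Omega^\ast\setminus\bigcup_{j\in J}\BVV j$.

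For $(i)\Rightarrow(ii)$ I would use that a domain of holomorphy is pseudoconvex. \emph{Log-convexity}: then $\Omega\setminus\BVV 0$ is pseudoconvex, hence so is the tube domain $\{\zeta\in\CC^n:(\operatorname{Re}\zeta_1,\dots,\operatorname{Re}\zeta_n)\in\log\Omega\}$, which is locally biholomorphic to it through the covering $(\zeta_1,\dots,\zeta_n)\mapsto(e^{\zeta_1},\dots,e^{\zeta_n})$; by Bochner's tube theorem its base $\log\Omega$ is convex. \emph{Relative completeness}: one always has $\Omega\subset\Omega^\ast$ with $\Omega^\ast\setminus\Omega\subset\BVV 0$ (here log-convexity is used), so it remains to check that $\Omega^\ast\cap\BVV j\subset\Omega$ for $j\notin J$; for such $j$, expanding an arbitrary $f\in\mathcal O(\Omega)$ in a Laurent series in $z_j$ near a point $z^0\in\Omega^\ast\cap\BVV j$, one sees that the coefficients are holomorphic in the remaining variables across $\BVV j$ and --- since $\Omega\cap\BVV j\neq\varnothing$, by the identity theorem on the connected ``projection'' --- that the terms of negative $z_j$-degree vanish; so $f$ extends holomorphically to a neighbourhood of $z^0$, and, $\Omega$ being a domain of holomorphy, $z^0\in\Omega$.

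For $(ii)\Rightarrow(i)$ the idea is to exhaust the convex open set $\log\Omega$ from inside by rational polyhedra and translate this back: using the relative-completeness clause to keep the exponent signs consistent with $J$, one writes $\Omega$ as an increasing union of relatively complete Reinhardt subdomains, each a \emph{finite} intersection of elementary Reinhardt domains $\{z:|z^\alpha|<r\}$ ($\alpha\in\mathbb Z^n$, $r>0$) and hence a domain of holomorphy, and then invokes the Behnke--Stein theorem on increasing unions of domains of holomorphy. An alternative route is to observe directly that $\Omega$ is pseudoconvex --- $\Omega\setminus\BVV 0$ being the image of the tube over the convex set $\log\Omega$, and the relative-completeness clause handling the passage across the hyperplanes $\BVV j$, $j\notin J$ --- and then appeal to the Levi problem.

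I expect the substance to lie in $(ii)\Rightarrow(i)$, namely in transporting the convex geometry of $\log\Omega$ to genuine holomorphic functions on $\Omega$. The delicate point is that a supporting functional of $\log\Omega$ need not be rational, so $\log\Omega$ is \emph{not} in general a countable intersection of integer half-spaces; one must instead approximate it from inside by rational polyhedra, and it is exactly the relative-completeness part of $(ii)$ that guarantees these approximations carry honest monomials --- with the correct signs on the exponents --- holomorphic on all of $\Omega$, so that the coordinate hyperplanes are neither wrongly added nor wrongly removed. The implication $(i)\Rightarrow(ii)$ is comparatively routine once the logarithmic dictionary and basic Laurent theory are in place.
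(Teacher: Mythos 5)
The paper itself offers no proof of Theorem \ref{1.11.13}: it is quoted from \cite{JarPfl2008} and used as a black box, so there is no in-paper argument to compare yours against; what follows is an assessment of your sketch on its own terms. Your outline is essentially the standard proof (and close in spirit to the one in the cited monograph): Bochner's tube theorem via the exponential covering for log-convexity, Laurent series for the relative-completeness clause, and either a rational polyhedral exhaustion with Behnke--Stein or pseudoconvexity plus the Levi problem for the converse. Two steps need tightening. In (i)$\Rightarrow$(ii) you cannot literally expand $f$ ``in a Laurent series in $z_j$ near a point $z^0\in\Omega^\ast\cap\BVV{j}$'', since a priori $z^0\notin\Omega$; the correct device is the global Laurent expansion $f=\sum_{\alpha\in\mathbb{Z}^n}a_\alpha z^\alpha$, locally uniformly convergent on the Reinhardt domain $\Omega$, together with the vanishing of all $a_\alpha$ with $\alpha_j<0$ (local coefficients vanish near points of $\Omega\cap\BVV{j}$, then the identity theorem --- and the connectedness of the relevant parameter region does require a check), after which the domain of convergence of the series is a log-convex relatively complete Reinhardt domain containing $\Omega$, and (i) forces it to coincide with $\Omega$, giving $\Omega^\ast\cap\BVV{j}\subset\Omega$. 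In (ii)$\Rightarrow$(i), your first route is the appropriately elementary one, but to close it you need, besides rationality, that the approximating polyhedra recede in the direction $-e_j$ for every $j\notin J$, so that the corresponding elementary Reinhardt domains actually pick up the axis points of $\Omega$; fortunately this is automatic, since (ii) makes $\log\Omega$ invariant under decreasing the $j$-th coordinate for $j\notin J$, whence every supporting half-space has nonnegative $j$-th coefficient and a small rational perturbation preserves this sign --- you should also note that the finite intersections of elementary Reinhardt domains are connected (their logarithmic images are convex polyhedra) before invoking Behnke--Stein. Your alternative route through the Levi problem is correct but invokes machinery far heavier than the elementary framework this note advertises, and heavier than what the cited book actually uses.
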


\begin{Theorem}[\cite{JarPfl2008}, Theorem 1.12.4]\label{1.12.4}
For every Reinhardt domain $\Omega\subset\CC^n$ its envelope of holomorphy $\wdht\Omega$
is a Reinhardt domain.
\end{Theorem}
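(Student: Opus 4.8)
\emph{Strategy.} The plan is to realise $\wdht\Omega$ explicitly as a \emph{schlicht} Reinhardt domain of holomorphy, using the theory of Laurent series on Reinhardt domains together with the characterisation of Reinhardt domains of holomorphy in Theorem \ref{1.11.13}. I use that $\wdht\Omega$ is, up to isomorphism over $\CC^n$, uniquely determined by the following properties: it is a domain of holomorphy (a Riemann domain over $\CC^n$) containing $\Omega$, every $f\in\mathcal O(\Omega)$ extends to it, and it is maximal with these properties. Hence it suffices to produce a Reinhardt domain of holomorphy $\Omega'\subset\CC^n$ with $\Omega\subset\Omega'$ such that every $f\in\mathcal O(\Omega)$ extends holomorphically to $\Omega'$; since a domain of holomorphy carries a function whose domain of existence is exactly itself, such an $\Omega'$ automatically has the maximality property, so $\wdht\Omega=\Omega'$, and in particular $\wdht\Omega$ is a Reinhardt domain.

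\emph{Construction of $\Omega'$.} Put $J:=\{j\in\{1,\dots,n\}: \Omega\cap\BVV{j}\neq\varnothing\}$. Recall that $\log\Omega$ is a domain in $\RR^n$, so $\conv(\log\Omega)$ is an open convex set (the convex hull of an open set is open). Define $\Omega'$ to be the Reinhardt set whose logarithmic image is $\conv(\log\Omega)$, taken in its relatively complete form and with exactly the coordinate subspaces $\BVV{j}$, $j\in J$, re-inserted; concretely
$\Omega':=\intt\big(\overline{\exp(\conv(\log\Omega))}\big)\setminus\bigcup_{j\notin J}\BVV{j}$.
Then $\Omega'$ is a Reinhardt domain, it is log-convex, $\Omega'\cap\BVV{j}=\varnothing$ precisely for $j\notin J$, and $\Omega\subset\Omega'$ (routine from the definitions: if $j\in J$ then $\log\Omega$ is unbounded towards $-\infty$ in the $j$-th coordinate, so $\Omega'$ meets $\BVV{j}$ along a set containing $\Omega\cap\BVV{j}$). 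Consequently $\Omega'=(\Omega')^{\ast}\setminus\bigcup_{j:\,\Omega'\cap\BVV{j}=\varnothing}\BVV{j}$, and Theorem \ref{1.11.13} shows that $\Omega'$ is a domain of holomorphy.

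\emph{Extension of functions, and the main obstacle.} Let $f\in\mathcal O(\Omega)$. On $\Omega_0:=\Omega\cap(\CC^\ast)^n$ (a dense Reinhardt subdomain) $f$ admits a Laurent expansion $f=\sum_{\alpha\in\mathbb Z^n}c_\alpha z^\alpha$ converging locally uniformly; moreover $c_\alpha=0$ whenever $\alpha_j<0$ for some $j\in J$, because near a point of $\Omega\cap\BVV{j}$ the function $f$ is holomorphic in $z_j$. Thus the support of the series is contained in $\{\alpha:\alpha_j\geq0\ \text{for all}\ j\in J\}$, and by the several-variable Cauchy--Hadamard description its domain of convergence is a log-convex relatively complete Reinhardt domain which avoids no $\BVV{j}$ with $j\in J$ and which contains $\Omega_0$; hence it contains $\exp(\conv(\log\Omega))$ and, as there are no negative exponents on the indices in $J$, all of $\Omega'$. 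Therefore $f$ extends holomorphically to $\Omega'$, uniquely by the identity theorem ($\Omega'$ is connected and $\Omega\subset\Omega'$). I expect this step to be the main obstacle: getting the Laurent / Cauchy--Hadamard bookkeeping exactly right, in particular the interplay between the convexification of $\log\Omega$ and the set $J$, so that the convergence domain genuinely engulfs the re-inserted coordinate-subspace part of $\Omega'$.

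\emph{Conclusion.} By the two preceding steps, restriction $\mathcal O(\Omega')\to\mathcal O(\Omega)$ is a bijection, so $\Omega'$ is a domain of holomorphy containing $\Omega$ to which every $f\in\mathcal O(\Omega)$ extends, and (by the remark in the first paragraph) it is maximal with these properties. These are the defining properties of the envelope of holomorphy, so $\wdht\Omega=\Omega'$ as Riemann domains over $\CC^n$; in particular $\wdht\Omega$ is schlicht, and being equal to $\Omega'$ it is a Reinhardt domain. As a cross-check, the rotation group $\{z\mapsto(e^{i\theta_1}z_1,\dots,e^{i\theta_n}z_n):\theta\in\RR^n\}$ acts on $\CC^n$ preserving $\Omega$, hence, by functoriality of the envelope under automorphisms of the base, acts on $\wdht\Omega$ compatibly with the spread; once $\wdht\Omega$ is known to be schlicht this action is again rotation, reconfirming that $\wdht\Omega$ is Reinhardt, and Theorem \ref{1.11.13} moreover yields $\log\wdht\Omega=\conv(\log\Omega)$.
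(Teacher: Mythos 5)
The paper does not prove this statement at all: Theorem \ref{1.12.4} is quoted verbatim from \cite{JarPfl2008} and used as a black box (together with Theorem \ref{1.11.13}) to derive Corollary \ref{CorRGeomI}. So there is no in-paper proof to compare against; your proposal has to be judged on its own, and against the argument in the cited book.

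On its own terms your outline is correct, and it is essentially the standard (and the book's) route: expand $f\in\mathcal O(\Omega)$ in a Laurent series, kill the coefficients $c_\alpha$ with $\alpha_j<0$ for $j\in J$ by boundedness of $f$ near a point of $\Omega\cap\BVV{j}$ (this uses that $\Omega\setminus\BVV{0}$ is connected, so the coefficient integrals over different tori agree), and then show that the resulting series converges on the log-convex relatively complete hull $\Omega'=\intt(\overline{\exp(\conv(\log\Omega))})\setminus\bigcup_{j\notin J}\BVV{j}$, which is a Reinhardt domain of holomorphy by Theorem \ref{1.11.13}. Your uniqueness/maximality argument identifying $\wdht\Omega$ with $\Omega'$ is fine, and note that your construction actually proves more than the statement: it gives the explicit description $\wdht\Omega=\wdtl\Omega$, i.e.\ Corollary \ref{CorRGeomI}, directly. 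The one place you flag as the main obstacle is indeed where all the work sits, but it closes with standard facts: the domain of convergence of a Laurent series is an open log-convex Reinhardt set, complete in each direction $j\in J$ in which no negative exponents occur; since it contains $\Omega\setminus\BVV{0}$ it contains $\exp(\conv(\log\Omega))$; and for a point $a\in\Omega'\cap\BVV{0}$ with $a_j=0$ exactly for $j\in J_0\subset J$, the inclusion of a small polydisc around $a$ in $\overline{\exp(\conv(\log\Omega))}$ translates, in logarithmic coordinates and using that $\intt(\overline S)=S$ for a convex open $S$, into a point of $\exp(\conv(\log\Omega))$ with the same moduli as $a$ in the coordinates outside $J_0$ and small positive moduli in the $J_0$-coordinates, from which completeness in the $J_0$-directions yields convergence near $a$. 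Spelling that out (rather than appealing vaguely to ``Cauchy--Hadamard'') is what a complete write-up would need; with it, the proof is sound.
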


\begin{Corollary}\label{CorRGeomI}
Let $\Omega\subset\CC^n$ be a Reinhardt domain and let $\wdht\Omega$ be its envelope of holomorphy.
Then

{\rm (a)} $\BVV{j}\cap\wdht\Omega=\varnothing$ iff $\BVV{j}\cap\Omega=\varnothing$,

{\rm (b)} $\log\wdht\Omega=\conv(\log\Omega)$.

Consequently, by Theorem \ref{1.12.4},
$$
\wdht\Omega=\intt(\overline{\exp(\conv(\log\Omega))})\setminus
\bigcup_{\substack{j\in\{1,\dots,n\}\\ \Omega\cap\BVV{j}=\varnothing}}\BVV{j}=:\wdtl\Omega.
$$
\end{Corollary}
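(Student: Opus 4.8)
The plan is to deduce everything from the two structural facts already recorded: $\wdht\Omega$ is a Reinhardt domain (Theorem~\ref{1.12.4}), and Theorem~\ref{1.11.13} characterises Reinhardt domains of holomorphy through log-convexity together with a completeness condition along the coordinate hyperplanes $\BVV{j}$. Part~(a) is essentially free: one implication is immediate from $\Omega\subset\wdht\Omega$, and for the converse, if $\Omega\cap\BVV{j}=\varnothing$ then $z_j$ does not vanish on $\Omega$, so $1/z_j\in\mathcal{O}(\Omega)$ and extends to some $g\in\mathcal{O}(\wdht\Omega)$; the identity principle on the connected set $\wdht\Omega$ forces $z_j g\equiv 1$, hence $z_j$ is zero-free on $\wdht\Omega$, i.e.\ $\wdht\Omega\cap\BVV{j}=\varnothing$.

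For~(b) and the displayed formula I would set $C:=\conv(\log\Omega)$, a non-empty open convex set, and work with the set $\wdtl\Omega$ from the statement, aiming to show that $\wdtl\Omega$ is a Reinhardt domain of holomorphy containing $\Omega$. The only non-formal ingredients are the standard facts about logarithmic images from~\cite{JarPfl2008}: $\exp C$ is an open, connected, Reinhardt set, so $\intt(\overline{\exp C})$ is again an open connected Reinhardt set with $\log(\intt(\overline{\exp C}))=C$, and deleting finitely many $\BVV{j}$ preserves connectedness without altering the logarithmic image. Granting these, $\wdtl\Omega$ is a Reinhardt domain, $\log\wdtl\Omega=C$ is convex, and $(\wdtl\Omega)^\ast=\intt(\overline{\exp C})$. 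Moreover $\Omega\subset\wdtl\Omega$: indeed $\Omega\setminus\BVV{0}=\exp(\log\Omega)\subset\exp C$, and since $\Omega$ is an open Reinhardt domain this forces $\Omega\subset\intt(\overline{\exp C})$, while $\Omega$ avoids precisely the hyperplanes $\BVV{j}$ with $\Omega\cap\BVV{j}=\varnothing$. A short check then gives $\{j:\wdtl\Omega\cap\BVV{j}=\varnothing\}=\{j:\Omega\cap\BVV{j}=\varnothing\}$ --- for the nontrivial inclusion, any $a\in\Omega\cap\BVV{j}$ has all the discarded coordinates non-zero, hence lies in $\wdtl\Omega$ --- so $\wdtl\Omega$ satisfies the completeness condition of Theorem~\ref{1.11.13}(ii) and is a domain of holomorphy.

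Now recall that, in the schlicht setting (guaranteed here by Theorem~\ref{1.12.4}), the envelope of holomorphy is the smallest domain of holomorphy containing $\Omega$ (cf.~\cite{JarPfl2008}). Hence $\wdht\Omega\subset\wdtl\Omega$, so $\log\wdht\Omega\subset\log\wdtl\Omega=C=\conv(\log\Omega)$; conversely $\wdht\Omega$ is a Reinhardt domain of holomorphy (Theorems~\ref{1.12.4} and~\ref{1.11.13}), hence log-convex, and $\Omega\subset\wdht\Omega$ gives $\log\wdht\Omega\supset\conv(\log\Omega)$. This proves~(b). Finally, applying Theorem~\ref{1.11.13}(ii) to $\wdht\Omega$ and inserting $(\wdht\Omega)^\ast=\intt(\overline{\exp(\log\wdht\Omega)})=\intt(\overline{\exp(\conv(\log\Omega))})$ from~(b) together with $\{j:\wdht\Omega\cap\BVV{j}=\varnothing\}=\{j:\Omega\cap\BVV{j}=\varnothing\}$ from~(a) yields $\wdht\Omega=\wdtl\Omega$.

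The main obstacle is the claim that the candidate $\wdtl\Omega$ really is a Reinhardt domain of holomorphy: this rests on the point-set behaviour of $\intt(\overline{\exp C})$ and on careful tracking of which coordinate hyperplanes are met. These matters are elementary and fully available in~\cite{JarPfl2008}, so in the write-up I would invoke them as cited lemmas rather than reprove them; the remainder is a straightforward unwinding of the definitions combined with Theorems~\ref{1.11.13} and~\ref{1.12.4}.
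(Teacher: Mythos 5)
Your proposal is correct and follows essentially the same route as the paper: part (a) via the holomorphic extension of $1/z_j$, and part (b) by verifying that $\wdtl\Omega$ is a Reinhardt domain of holomorphy containing $\Omega$ (using the point-set facts from \cite{JarPfl2008} and Theorem~\ref{1.11.13}), so that $\wdht\Omega\subset\wdtl\Omega$, with the reverse inclusion of logarithmic images coming from the log-convexity of $\wdht\Omega$. You merely make explicit a few steps the paper leaves implicit (the identity-principle argument and the final appeal to convexity of $\log\wdht\Omega$), which is fine.
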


\begin{proof}
(a) If $\BVV{j}\cap\Omega=\varnothing$, then the function
$\Omega\ni z_j\tuu1/z_j$ is holomorphic on $\Omega$. Thus, it must be holomorphically continuable
to $\wdht\Omega$, which means that $\BVV{j}\cap\wdht\Omega=\varnothing$.

(b) First observe that, by Remark 1.5.6(a) from \cite{JarPfl2008}, we get
$\log\wdtl\Omega=\conv(\log\Omega)$. Consequently, $\wdtl\Omega$ is a domain of holomorphy
with $\Omega\subset\wdtl\Omega$. Hence, $\wdht\Omega\subset\wdtl\Omega$.
Finally, $\log\Omega\subset\log\wdht\Omega\subset\log\wdtl\Omega=\conv(\log\Omega)$.
\end{proof}

\begin{Proposition}[\cite{JarPfl2008}, Proposition 1.14.20]\label{1.14.20}
Let $\Omega$ be a log-convex Reinhardt domain.

{\rm (a)} Let $u\in\PSH(\Omega)$ be such that
$$
u(z_1,\dots,z_n)=u(|z_1|,\dots,|z_n|),\quad (z_1,\dots,z_n)\in\Omega.
$$
Then the function
$$
\log\Omega\ni(x_1,\dots,x_n)\overset{\phi}\tuu u(e^{x_1},\dots,e^{x_n})
$$
is convex.

{\rm (b)} Let $\phi\in\CVX(\log\Omega)$. Then the function
$$
\Omega\setminus\BVV{0}\ni z\overset{u}\tuu\phi(\log|z_1|,\dots,\log|z_n|)
$$
is plurisubharmonic.
\end{Proposition}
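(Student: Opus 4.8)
The plan is to derive both parts from two standard facts: the composition of a plurisubharmonic function with a holomorphic map is plurisubharmonic, and a finite convex function on an open convex set is the pointwise supremum of its affine minorants (equivalently, carries a supporting affine function at every point). Write $\log z:=(\log|z_1|,\dots,\log|z_n|)$ for $z\in\CC^n\setminus\BVV{0}$; since $\Omega$ is a log-convex Reinhardt domain one has $\exp(\log\Omega)\subset\Omega$.

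\emph{Part (a).} Here $\phi(x)=u(e^{x_1},\dots,e^{x_n})$, and it suffices to show that $\phi$ is convex on every segment of $\log\Omega$. So fix a closed segment $[x',x'']\subset\log\Omega$ and set $b:=x''-x'$. Since $\log\Omega$ is open, convex, and contains this compact segment, it contains $\{x'+tb:t\in(-\delta,1+\delta)\}$ for some $\delta>0$. Define
\[
g(\lambda):=\big(e^{x'_1+\lambda b_1},\dots,e^{x'_n+\lambda b_n}\big),\qquad-\delta<\operatorname{Re}\lambda<1+\delta,
\]
a holomorphic map with $\log g(\lambda)=x'+(\operatorname{Re}\lambda)b\in\log\Omega$, hence $g$ maps the strip into $\exp(\log\Omega)\subset\Omega$. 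Then $u\circ g$ is subharmonic on the strip, and by the invariance hypothesis on $u$ it equals $\phi\big(x'+(\operatorname{Re}\lambda)b\big)$, a function of $\operatorname{Re}\lambda$ alone. Now invoke the elementary one-variable lemma that a subharmonic function on a vertical strip depending only on $\operatorname{Re}\lambda$ is convex in $\operatorname{Re}\lambda$ (proof: apply the maximum principle on rectangles to $u\circ g-\ell$, where $\ell=\ell(\operatorname{Re}\lambda)$ is any affine function dominating $u\circ g$ at the two endpoints); this gives that $t\mapsto\phi(x'+tb)$ is convex on $(-\delta,1+\delta)$, so on $[0,1]$. As $[x',x'']$ was arbitrary, $\phi\in\CVX(\log\Omega)$.

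\emph{Part (b).} If $\phi\equiv-\infty$ then $u\equiv-\infty$; otherwise $\phi$ is finite, hence continuous on the open convex set $\log\Omega$, so $u=\phi\circ\log$ is continuous, in particular upper semicontinuous, on $\Omega\setminus\BVV{0}$. For each $x\in\log\Omega$ choose a subgradient $p\in\partial\phi(x)$ and put $\ell_x(y):=\phi(x)+\langle p,y-x\rangle$; then $\ell_x\le\phi$ on $\log\Omega$ with $\ell_x(x)=\phi(x)$, so $\phi=\sup_x\ell_x$ pointwise on $\log\Omega$. For each affine $\ell=\ell_x$ the function $v_\ell(z):=\ell(\log z)=\sum_j p_j\log|z_j|+c$ is pluriharmonic on $\Omega\setminus\BVV{0}$ (locally $\log|z_j|=\operatorname{Re}\log z_j$), hence plurisubharmonic, and $u=\sup_\ell v_\ell$. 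Since $\sup_\ell v_\ell\ge v_\ell$ pointwise, the average of $u$ over a small circle in a complex line is, for every $\ell$, at least the average of $v_\ell$, which is $\ge v_\ell$ at the centre; taking the supremum over $\ell$ yields the sub-mean-value inequality for $u$ on complex lines, and together with upper semicontinuity this gives $u\in\PSH(\Omega\setminus\BVV{0})$. (Equivalently, $u=(\sup_\ell v_\ell)^\ast$ is plurisubharmonic.)

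\emph{On the difficulties.} I do not expect a serious obstacle: once the two standard inputs — supporting hyperplanes for convex functions, and ``subharmonic and $\operatorname{Re}\lambda$-dependent $\Rightarrow$ convex'' — are in place, the argument is just translation through the dictionary $\exp\leftrightarrow\log$; the only care needed is with the degenerate cases ($\phi$ or $u$ identically $-\infty$) and with the (harmless) possible unboundedness of $\log\Omega$. A heavier but also viable route for (b) is to verify the smooth case directly, via $\partial^2u/\partial z_k\partial\bar z_l=(4z_k\bar z_l)^{-1}\,\partial^2\phi/\partial x_k\partial x_l$ — so the complex Hessian of $u$ is a congruence transform of the positive semidefinite real Hessian of $\phi$ — and then to approximate a general convex $\phi$ by smooth convex functions $\phi_\eps$ (convolution with a symmetric mollifier, Jensen's inequality giving $\phi_\eps\ge\phi$ on shrunk subdomains) and pass to the locally uniform limit; I would prefer the supremum-of-affine-functions argument as more elementary and self-contained.
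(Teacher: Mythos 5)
The paper does not prove this proposition at all: it is imported verbatim from \cite{JarPfl2008} (Proposition 1.14.20), so there is no internal proof to compare yours against. Your argument is correct and is essentially the standard proof underlying that reference: in (a) you pull $u$ back by the holomorphic map $\lambda\mapsto(e^{x_1'+\lambda b_1},\dots,e^{x_n'+\lambda b_n})$, use rotation invariance to see the composition depends only on $\operatorname{Re}\lambda$, and invoke the classical fact that such a subharmonic function is convex in $\operatorname{Re}\lambda$; in (b) you write $\phi$ as the pointwise supremum of its supporting affine functions and pull each back to the pluriharmonic function $\sum_j p_j\log|z_j|+c$, then verify the sub-mean-value inequality for the supremum, which is legitimate because $u$ is itself continuous. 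Two small remarks. First, $\exp(\log\Omega)\subset\Omega$ uses only that $\Omega$ is Reinhardt; log-convexity is what you actually need to make $\log\Omega$ convex, i.e.\ for the segments in (a) and the subgradients in (b). Second, your parenthetical justification of the one-variable lemma is too quick as stated: applying the maximum principle to $u\circ g-\ell$ on a rectangle controls only the vertical edges, and the horizontal edges must be handled, e.g.\ by subtracting $\eps\operatorname{Re}\bigl((\lambda-a)(\lambda-b)\bigr)$ (which is $\leq0$ on the vertical edges and tends to $-\infty$ uniformly as the height of the rectangle grows) and letting $\eps\to0$; alternatively just cite the standard lemma. Your dichotomy in (b) is sound, since a convex function on an open convex set taking the value $-\infty$ at one point is identically $-\infty$, and otherwise it is finite and hence continuous, so the degenerate case is harmless.
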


\begin{Corollary}\label{CorRGeomII}
Let $\varnothing\neq A\subset\Omega$, where $\Omega$ is log-convex
Reinhardt domain and $A$ is an Reinhardt open set. Then
$$
h_{A,D}^\ast(z)=\Phi_{\log A,\log\Omega}(\log|z_1|,\dots,\log|z_n|),\quad z=(z_1,\dots,z_n)\in\Omega\setminus\BVV{0};
$$
cf.~Definition \ref{DefCEF}.
\end{Corollary}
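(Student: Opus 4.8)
The plan is to prove the two inequalities between $h_{A,\Omega}^\ast$ and the function
$\Psi(z):=\Phi_{\log A,\log\Omega}(\log|z_1|,\dots,\log|z_n|)$, $z\in\Omega\setminus\BVV{0}$,
transporting the problem back and forth between $\PSH(\Omega)$ and $\CVX(\log\Omega)$ by means of
Proposition \ref{1.14.20}. (Note that $\log\Omega$ is a convex domain because $\Omega$ is log-convex,
and $\log A$ is a non-empty open subset of it, since an open set cannot be contained in the finite
union of hyperplanes $\BVV{0}$; so $\Psi$ is well defined.) First I would record three elementary
facts. Since the zero function lies in the defining family of $h_{A,\Omega}$ and every $u$ in that
family satisfies $u|_A\le0$, we get $h_{A,\Omega}\equiv0$ on $A$ and $0\le h_{A,\Omega}\le1$ on
$\Omega$; as $A$ is open, this gives $h_{A,\Omega}^\ast\equiv0$ on $A$ and $0\le h_{A,\Omega}^\ast\le1$.
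Finally, because $\Omega$ and $A$ are Reinhardt, pre-composition with a rotation
$z\tuu(e^{i\theta_1}z_1,\dots,e^{i\theta_n}z_n)$ maps the family $\{u\in\PSH(\Omega):u\le1,\ u|_A\le0\}$
bijectively onto itself; taking the supremum, $h_{A,\Omega}$, and hence also $h_{A,\Omega}^\ast$
(upper regularization commutes with a homeomorphism), is invariant under all such rotations, i.e.
$h_{A,\Omega}^\ast(z)=h_{A,\Omega}^\ast(|z_1|,\dots,|z_n|)$ on $\Omega$.

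For the inequality $h_{A,\Omega}^\ast\le\Psi$ on $\Omega\setminus\BVV{0}$: since $h_{A,\Omega}^\ast\in\PSH(\Omega)$
is rotation-invariant, Proposition \ref{1.14.20}(a) shows that
$\phi_1\colon\log\Omega\ni x\tuu h_{A,\Omega}^\ast(e^{x_1},\dots,e^{x_n})$ is convex on $\log\Omega$.
It satisfies $\phi_1\le1$, and $\phi_1=0$ on $\log A$ (any $x\in\log A$ has the form
$(\log|z_1|,\dots,\log|z_n|)$ for some $z\in A\setminus\BVV{0}$, and $(|z_1|,\dots,|z_n|)\in A\setminus\BVV{0}$,
where $h_{A,\Omega}^\ast=0$). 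Hence $\phi_1\le\Phi_{\log A,\log\Omega}$ by the definition of the convex
extremal function, which after unravelling $\Psi$ is exactly $h_{A,\Omega}^\ast\le\Psi$.

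For the inequality $h_{A,\Omega}^\ast\ge\Psi$ on $\Omega\setminus\BVV{0}$: put
$\phi:=\Phi_{\log A,\log\Omega}\in\CVX(\log\Omega)$, so $0\le\phi<1$ and $\phi|_{\log A}=0$ by
Remark \ref{RemCEF}(\ref{RemCEFa}). By Proposition \ref{1.14.20}(b) the function
$u_0(z):=\phi(\log|z_1|,\dots,\log|z_n|)$ is plurisubharmonic on $\Omega\setminus\BVV{0}$, with
$0\le u_0<1$ and $u_0=0$ on $A\setminus\BVV{0}$. Since $u_0$ is bounded above near $\BVV{0}\cap\Omega$
and $\BVV{0}$ is a closed pluripolar subset of $\Omega$ (a finite union of complex hyperplanes), the
Riemann-type removable singularity theorem for plurisubharmonic functions gives an extension
$\wdtl u_0:=\limsup_{\Omega\setminus\BVV{0}\ni w\to z}u_0(w)\in\PSH(\Omega)$ with $\wdtl u_0=u_0$ on
$\Omega\setminus\BVV{0}$; it still satisfies $\wdtl u_0\le1$ on $\Omega$, and $\wdtl u_0=0$ on $A$
(directly on $A\setminus\BVV{0}$, and on $A\cap\BVV{0}$ because $A$ is open and $u_0$ vanishes on a
punctured neighbourhood of each such point). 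Thus $\wdtl u_0$ belongs to the defining family of
$h_{A,\Omega}$, so $\wdtl u_0\le h_{A,\Omega}\le h_{A,\Omega}^\ast$; restricting to
$\Omega\setminus\BVV{0}$ yields $\Psi=u_0\le h_{A,\Omega}^\ast$. Combining the two inequalities proves
the corollary.

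The only step here that is not bookkeeping with the torus action and the dictionary of
Proposition \ref{1.14.20} is the plurisubharmonic extension of $u_0$ across the coordinate
hyperplanes $\BVV{0}\cap\Omega$; this is exactly the point where one must invoke a standard removable
singularity result for $\PSH$ functions (equivalently, one could first replace $\Omega$ and $A$ by
suitable approximating Reinhardt subdomains, but invoking the removability theorem is cleaner).
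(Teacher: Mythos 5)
Your proposal is correct and follows essentially the same route as the paper: rotation invariance plus Proposition \ref{1.14.20}(a) to get the convex function dominated by $\Phi_{\log A,\log\Omega}$, and Proposition \ref{1.14.20}(b) together with a plurisubharmonic extension across $\BVV{0}$ for the reverse inequality. The only difference is that you spell out the removable-singularity step that the paper compresses into ``$u$ extends to a $\wdtl u\in\PSH(\Omega)$'', which is a welcome clarification but not a different argument.
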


\begin{proof}
Since $A$ and $\Omega$ are invariant under rotations, we easily conclude that
$$
h_{A,D}^\ast(z)=h_{A,D}^\ast(|z_1|,\dots,|z_n|),\quad z=(z_1,\dots,z_n)\in\Omega.
$$
Thus, by Proposition \ref{1.14.20},
$$
h_{A,D}^\ast(z)=\phi(\log|z_1|,\dots,\log|z_n|),\quad
z=(z_1,\dots,z_n)\in\Omega\setminus\BVV{0},
$$
where $\phi\in\CVX(\log\Omega)$.
Clearly, $h_{A,D}^\ast=0$ on $A$. Thus $\phi=0$ on $\log A$. Finally,
$\phi\leq\Phi_{\log A,\log\Omega}$.

To prove the opposite inequality, observe that by Proposition \ref{1.14.20}, the function
$$
\Omega\setminus\BVV{0}\ni z\overset{u}\tuu\Phi_{\log A,\log\Omega}(\log|z_1|,\dots,\log|z_n|)
$$
is plurisubharmonic, $u<1$, and $u=0$ on $A\setminus\BVV{0}$. Consequently,
$u$ extends to a $\wdtl u\in\PSH(\Omega)$. Clearly, $\wdtl u\leq1$ and $\wdtl u=0$ on $A$.
Thus $\wdtl u\leq h_{A,D}^\ast$.
\end{proof}

\section{Proof of the cross theorem (Theorem \ref{ThmMainCross}) in the case
where $D_j$ is a Reinhardt domain of holomorphy and $A_j$ is an open Reinhardt set,
$j=1,\dots,N$.}\label{SectionProof}

We have to prove
that the envelope of holomorphy $\wdht{\BX}$ of the domain $\BX$ coincides with
$$
\wdtl{\BX}:=\Big\{(z_1,\dots,z_N)\in D_1\ttimes D_N: \sum_{j=1}^Nh_{A_j,D_j}^\ast(z_j)<1\Big\}.
$$
First, observe that $\wdtl{\BX}$ is a domain of holomorphy containing $\BX$.
Thus $\wdht{\BX}\subset\wdtl{\BX}$. On the other hand, by Proposition \ref{PropCEF} and
Corollary \ref{CorRGeomII}, $\log\wdtl{\BX}=\conv(\log\BX)=\log\wdht{\BX}$.
Thus, using Corollary \ref{CorRGeomI}, we only need to show that if
$\BVV{j}\cap\wdtl{\BX}\neq\varnothing$,
then $\BVV{j}\cap\BX\neq\varnothing$. Indeed, let for example
$a=(a_1,\dots,a_N)\in\BVV{n}\cap\wdtl{\BX}\neq\varnothing$. Take arbitrary
$b_j\in A_j$, $j=1,\dots,N-1$. Then $(b_1,\dots,b_{N-1},a_N)\in\BVV{n}\cap\BX$.
\qed


\bibliographystyle{amsplain}

\begin{thebibliography}{XXXXXXXXX}
\makeatletter\renewcommand{\@biblabel}[1]{[#1]}\makeatother
\bibitem[Akh-Ron 1973]{AkhRon1973}
N.I.~Akhiezer,  L.I.~Ronkin,
\textit{On separately analytic functions of several variables and theorems on "the thin end of the wedge"},
Usp. Mat. Nauk. 28 (1973), 27--44.
\bibitem[Ale-Zer 2001]{AleZer2001}
O.~Alehyane, A.~Zeriahi,
\textit{Une nouvelle version du th\'eor\`eme
d'extension de Hartogs pour les applications s\'epar\'ement holomorphes
entre espaces analytiques},
Ann. Polon. Math. 76 (2001), 245--278.
\bibitem[Ber 1912]{Ber1912}
 S.N.~Bernstein,
\textit{Sur l'ordre de la meilleure approximation des fonctions continues par des polynomes de degr\'e donn\'e},
Bruxelles, 1912.
\bibitem[Jar-Pfl 2008]{JarPfl2008}
M.~Jarnicki,  P.~Pflug,
\textit{First Steps in Several Complex Variables: Reinhardt Domains},
European Mathematical Society Publishing House, 2008.
\bibitem[Ngu 1997]{NTV1997}
Nguyen Thanh Van,
\textit{Separate analyticity and related subjects},
Vietnam J. Math. 25 (1997), 81--90.
\bibitem[Ngu-Sic 1991]{NguSic1991}
Nguyen Thanh Van, J.~Siciak,
\textit{Fonctions plurisousharmoniques
extr\'emales et syst\`emes doublement orthogonaux de fonctions analytiques},
Bull. Sci. Math. 115 (1991), 235--244.
\bibitem[Ngu-Zer 1991]{NguZer1991}
Nguyen Thanh Van, A.~Zeriahi,
\textit{Une extension du th\'eor\`eme de
Hartogs sur les fonctions s\'epar\'ement analytiques} in
Analyse Complexe Multivariables, R\'ecents D\'evelopements,
A. Meril (\'ed.), EditEl, Rende 1991, 183--194.
\bibitem[Ngu-Zer 1995]{NguZer1995}
Nguyen Thanh Van, A.~Zeriahi,
\textit{Syst\`emes doublement othogonaux de
fonctions holomorphes et applications},
Banach Center Publ. 31 (1995), 281--297.
\bibitem[Roc 1972]{Roc1972}
R.T.~Rockafellar,
\textit{Convex Analysis},
Princeton University Press, 1972.
\bibitem[Shi 1989]{Shi1989}
B.~Shiffman,
\textit{On separate analyticity and Hartogs theorem},
Indiana Univ. Math. J. 38 (1989), 943--957.
\bibitem[Sic 1969a]{Sic1969a}
J.~Siciak,
\textit{Analyticity and separate
analyticity of functions defined on lower dimensional subsets of $\CC^n$},
Zeszyty Naukowe UJ 13  ( 1969), 53--70.
\bibitem[Sic 1969b]{Sic1969b}
J.~Siciak,
\textit{Separately analytic functions and envelopes of holomorphy
of some lower dimensional subsets of $\CC^n$},
Ann. Polon. Math. 22 (1969--1970), 147--171.
\bibitem[Sic 1981]{Sic1981a}
J.~Siciak,
\textit{Extremal plurisubharmonic functions in $\CC^N$},
Ann. Polon. Math. 39 (1981), 175--211.
\bibitem[Zah 1976]{Zah1976}
V.P.~Zahariuta,
\textit{Separately analytic functions, generalizations of
Hartogs theorem, and envelopes of holomorphy},
Math. USSR-Sb. 30 (1976), 51--67.
\bibitem[Zer 2002]{Zer2002}
A.~Zeriahi,
\textit{Comportement asymptotique des syste\`mes doublement orthogonaux de Bergman:
une approche \'el\'ementaire},
Vietnam J. Math. 30 (2002), 177--188.
\end{thebibliography}

\end{document}